\theoremstyle{plain}
\newtheorem{theorem}{Theorem}[section]
\newtheorem{lemma}[theorem]{Lemma}
\theoremstyle{remark}
\numberwithin{equation}{section}
\newcommand{\eps}{\varepsilon}
\newcommand{\sii}{L_2}
\newcommand{\ie}{\emph{i.e.}}
\newcommand{\eg}{\emph{e.g.}}
\newcommand{\cf}{\emph{cf}}
\def\Om{\Omega}
\def\e{\varepsilon}
\def\p{\partial}
\def\E{\mbox{\rm e}}
\def\a{\alpha}
\def\Odr{\mathcal{O}}
\def\H{W_2}
\def\Hinf{W_\infty}
\def\di{\,d}
\def\iu{\mathrm{i}}
\def\RR{\mathbb{R}}
\def\CC{\mathbb{C}}
\DeclareMathOperator{\RE}{Re}
\DeclareMathOperator{\IM}{Im}
\DeclareMathOperator{\Dom}{\mathfrak{D}}
\newcounter{assumption}
\begin{document}

\title{\textbf{The effective Hamiltonian for thin layers
with non-Hermitian Robin-type boundary conditions}}
\author{%
Denis Borisov\,$^a$ \ and \ David Krej\v ci\v r\'\i k\,$^{b,c,d}$%
}
\date{\footnotesize
\begin{center}
\begin{quote}
\begin{enumerate}
{\it
\item[$a)$]
Bashkir State Pedagogical University,
October St.~3a, 450000 Ufa,
Russian Federation; \texttt{borisovdi@yandex.ru}
\vspace{-0.7ex}
\item[$b)$]
Department of Theoretical Physics,
Nuclear Physics Institute ASCR,
25068 \v Re\v z, Czech Republic;
\texttt{krejcirik@ujf.cas.cz}
\vspace{-0.7ex}
\item[$c)$]
Basque Center for Applied Mathematics,
Bizkaia Technology Park, Building 500,
48160 Derio, Kingdom of Spain
\vspace{-0.7ex}
\item[$d)$]
IKERBASQUE, Basque Foundation for Science,
Alameda Urquijo, 36, 5, 48011 Bilbao, Kingdom of Spain
}
\end{enumerate}
\end{quote}
\end{center}
%
24 February 2011
}
\maketitle

\begin{abstract}
\noindent
The Laplacian in an unbounded tubular neighbourhood of a hyperplane
with non-Hermitian complex-symmetric Robin-type boundary conditions
is investigated in the limit when the width of the neighbourhood diminishes.
We show that the Laplacian converges in a norm resolvent sense
to a self-adjoint Schr\"odinger operator in the hyperplane
whose potential is expressed solely
in terms of the boundary coupling function.
As a consequence, we are able to explain some peculiar
spectral properties of the non-Hermitian Laplacian
by known results for Schr\"odinger operators.
\end{abstract}
%

\section{Introduction}
%
There has been a growing interest in spectral properties
of differential operators in shrinking tubular neighbourhoods
of submanifolds of Riemannian manifolds,
subject to various boundary conditions.
This is partly motivated by the enormous progress in semiconductor physics,
where it is reasonable to try to model a complicated quantum Hamiltonian
in a thin nanostructure by an effective operator
in a lower dimensional substrate.
But the problem is interesting from the purely mathematical
point of view as well, because one deals with a singular limit
and it is not always obvious how
the information about the geometry and boundary conditions
are transformed into coefficients of the effective Hamiltonian.

The interest has mainly focused on self-adjoint problems,
namely on the Laplacian in the tubular neighbourhoods
with uniform boundary conditions of
Dirichlet \cite{DE,BMT,Friedlander-Solomyak_2007,Lampart-Teufel-Wachsmuth}
or Neumann~\cite{Schatzman_1996}
or a combination of those~\cite{K5}.
For more references see the review article~\cite{Grieser-2008}.
The purpose of the present paper is to show that one may obtain
an interesting self-adjoint effective operator in the singular limit
even if the initial operator is not Hermitian
and the geometry is rather trivial.

We consider an operator~$H_\eps$ which acts as the Laplacian
in a $d$-dimen\-sion\-al layer:
\begin{equation}\label{op}
  H_\eps u = -\Delta u
  \qquad\mbox{in}\qquad
  \Om_\e := \RR^{d-1} \times (0,\eps)
  \,,
\end{equation}
where $d \geqslant2$ and~$\eps$ is a small positive parameter,
subjected to non-Hermitian boundary conditions on~$\partial\Omega_\eps$.
Instead of considering the general problem, we rather restrict to
a special case of separated Robin-type boundary conditions
\begin{equation}\label{bc}
  \frac{\partial u}{\partial x_d} + \iu \;\! \alpha(x') \;\! u = 0
  \qquad\mbox{on}\qquad
  \p\Om_\e
  \,,
\end{equation}
where $x=(x_1,\ldots,x_{d-1},x_d)\equiv(x',x_d)$
denotes a generic point in~$\Omega_\eps$
and $\alpha$~is a real-valued bounded function.
More precisely, we consider~$H_\eps$ as
the m-sectorial operator~$H_\eps$ on $\sii(\Omega_\eps)$
which acts as~\eqref{op} in the distributional sense
on the domain consisting of functions~$u$
from the Sobolev space $\H^2(\Omega_\eps)$
satisfying the boundary conditions~\eqref{bc}.
We postpone the formal definition to the following section.

The model~$H_\eps$ in $d=2$ was introduced in~\cite{BK}
by the present authors.
In that paper, we developed a perturbation theory to study spectral properties
of~$H_\eps$ with~$\eps$ fixed in the regime when~$\alpha$
represents a small and local perturbation of constant~$\alpha_0$
(see below for the discussion of some of the results).
Additional spectral properties of~$H_\eps$ were further studied
in~\cite{KT} by numerical methods.
The present paper can be viewed as an addendum
by keeping~$\alpha$ (and~$d$) arbitrary
but sending rather the layer width~$\eps$ to zero.
We believe that the present convergence results
provide a valuable insight into the spectral phenomena
observed in the two previous papers.

The particular feature of the choice~\eqref{bc}
is that the boundary conditions are $\mathcal{PT}$-symmetric
in the sense that~$H_\eps$ commutes
with the product operator~$\mathcal{PT}$.
Here~$\mathcal{P}$ denotes the parity (space) reversal operator
$(\mathcal{P}u)(x):=u(x',\eps-x_d)$
and~$\mathcal{T}$ stands for the complex conjugation
$(\mathcal{T}u)(x):=\overline{u(x)}$;
the latter can be understood as the time reversal operator
in the framework of quantum mechanics.
The relevance of non-Hermitian $\mathcal{PT}$-symmetric
models in physics has been discussed in many papers recently,
see the review articles \cite{Bender_2007,Ali-review}.
Non-Hermitian boundary conditions of the type~\eqref{bc}
were considered in \cite{Kaiser-Neidhardt-Rehberg_2003b}
to model open (dissipative) quantum systems.
The role of~\eqref{bc} with constant~$\alpha$
in the context of perfect-transmission scattering
in quantum mechanics is discussed in~\cite{HKS}.

Another feature of~\eqref{bc} is that the spectrum of~$H_\eps$
``does not explode'' as the layer shrinks,
meaning precisely that the resolvent operator $(H_\eps+1)^{-1}$
admits a non-trivial limit in $\sii(\Omega_\eps)$ as $\eps \to 0$.
As a matter of fact, it is the objective of the present paper
to show that~$H_\eps$ converges in a norm resolvent sense
to the $(d-1)$-dimensional operator
\begin{equation}\label{limit.operator}
  H_0 := -\Delta + \alpha^2
  \qquad\mbox{on}\qquad
  \sii(\RR^{d-1})
  \,,
\end{equation}
which is a self-adjoint operator on the domain $\H^2(\RR^{d-1})$.
Again, we postpone the precise statement of the convergence,
which has to take into account that the operators~$H_\eps$ and~$H_0$
act on different Hilbert spaces, till the following section
(\cf~Theorem~\ref{th1.2}).
However, let us comment on spectral consequences of the result
already now.

First of all, we observe that a significantly non-self-adjoint
operator~$H_\eps$ converges, in the norm resolvent sense,
to a self-adjoint Schr\"odinger operator~$H_0$.
The latter contains the information about the non-self-adjoint
boundary conditions of the former in a simple potential term.
It follows from general facts \cite[Sec.~IV.3.5]{Kato} that
discrete eigenvalues of~$H_\eps$ either converge to
discrete eigenvalues of~$H_0$ or go to complex infinity
or to the essential spectrum of~$H_0$ as $\eps\to\infty$.

In particular, assuming that~$H_\eps$ and~$H_0$
have the same essential spectrum (independent as a set of~$\eps$),
the spectrum of~$H_\eps$ must approach
the real axis (or go to complex infinity) in the limit as $\eps \to 0$.
Although numerical computations performed in~\cite{KT}
suggest that~$H_\eps$ might have complex spectra in general,
perturbation analysis developed in~\cite{BK}
for the $2$-dimensional case
shows that both the essential spectrum
and weakly coupled eigenvalues are real.
The present paper demonstrates that the spectrum is real
also as the layer becomes infinitesimally thin,
for every $d \geqslant2$.
We would like to stress that the $\mathcal{PT}$-symmetry itself
is not sufficient to ensure the reality of the spectrum
and that the proof that a non-self-adjoint operator
has a real spectrum is a difficult task.

It is also worth noticing that the limiting operator~$H_0$
provides quite precise information about the spectrum
of~$H_\eps$ in the weak coupling regime for $d=2$
and $|\alpha|<\pi/\eps$ ($\eps$ fixed).
Indeed, let us consider the following special profile
of the boundary function:
$$
  \alpha_c(x') := \alpha_0 + c \;\! \beta(x')
  \,,
$$
where $\alpha_0$ is a real constant,
$\beta$~is a real-valued function of compact support
and~$c$ is a real parameter
(the regime of weak coupling corresponds to small~$c$).
Note that the essential spectrum of both~$H_\eps$ and~$H_0$
coincides with the interval $[\alpha_0^2,\infty)$,
for~$\beta$ is compactly supported (\cf~\cite[Thm.~2.2]{BK}).
It is proved in~\cite{BK}
that if $\alpha_0 \int_{\RR}\beta(x') \, dx'$ is negative,
then~$H_\eps$ possesses exactly one discrete real eigenvalue~$\mu(c)$
converging to~$\alpha_0^2$ as $c \to 0 +$
and the asymptotic expansion
$$
  \mu(c) = \alpha_0^2
  - c^2 \;\! \alpha_0^2 \left(\int_{\RR}\beta(x') \, dx'\right)^2
  + \mathcal{O}(c^3)
  \qquad\mbox{as}\qquad
  c \to 0 +
$$
holds true.
As a converse result, it is proved in~\cite{BK}
that there is no such a weakly coupled eigenvalue
if the quantity $\alpha_0 \int_{\RR}\beta(x') \, dx'$ is positive.
These weak coupling properties, including the asymptotics above,
are well known for the Schr\"odinger operator~$H_0$
with the potential given by~$\alpha_c^2$, 
see \cite{Gadylshin_2002}.

At the same time, the form of the potential in $H_0$~explains
some of the peculiar characteristics of~$H_\eps$ even for large~$c$.
As an example, let us recall that a highly non-monotone dependence
of the eigenvalues of~$H_\eps$ on the coupling parameter~$c$
was observed in the numerical analysis of~\cite{KT}.
As the parameter increases, a real eigenvalue typically emerges
from the essential spectrum, reaches a minimum
and then comes back to the essential spectrum again.
This behaviour is now easy to understand from
the non-linear dependence of the potential~$\alpha_c^2$ on~$c$.

On the other hand, we cannot expect that~$H_0$ represents
a good approximation of~$H_\eps$ for the values of parameters
for which~$H_\eps$ is known to possess complex eigenvalues~\cite{KT}.
It would be then desirable to compute the next to leading term
in the asymptotic expansion of~$H_\eps$ as $\eps \to 0$.

This paper is organized as follows.
In Section~\ref{Sec.main} we give a precise definition
of the operators~$H_\eps$ and~$H_0$ and state the norm
resolvent convergence of the former to the latter as $\eps \to 0$
(Theorem~\ref{th1.2}).
The rest of the paper consists of Section~\ref{Sec.proof}
in which a proof of the convergence result is given.

\section{The main result}\label{Sec.main}
%
We start with giving a precise definition of the operators~$H_\eps$ and~$H_0$.

The limiting operator~\eqref{limit.operator} can be immediately introduced
as a bounded perturbation of the free Hamiltonian on $\sii(\RR^{d-1})$,
which is well known to be self-adjoint on the domain $\H^2(\RR^{d-1})$.
For later purposes, however, we equivalently understand~$H_0$
as the operator
associated on $\sii(\RR^{d-1})$ with the quadratic form
\begin{align*}
  h_0[v] &:= \int_{\RR^{d-1}} |\nabla' v(x')|^2 \, dx'
  + \int_{\RR^{d-1}} \alpha(x')^2 \, |v(x')| \, dx'
  ,
  \\
  v \in \Dom(h_0) &:= \H^1(\RR^{d-1})
  \,.
\end{align*}
Here and in the sequel we denote by $\nabla'$ the gradient
operator in~$\RR^{d-1}$,
while~$\nabla$ stands for the ``full'' gradient in~$\RR^d$.

In the same manner, we introduce~$H_\eps$ as the m-sectorial operator
associated on $\sii(\Omega_\eps)$ with the quadratic form
\begin{align*}
  h_\eps[u] &:= \int_{\Omega_\eps} |\nabla u(x)|^2 \, dx
  + \iu \int_{\RR^{d-1}} \alpha(x') \, |u(x',\eps)|^2 \, dx'
  - \iu \int_{\RR^{d-1}} \alpha(x') \, |u(x',0)|^2 \, dx'
  ,
  \\
  u \in \Dom(h_0) &:= \H^1(\Omega_\eps)
  \,.
\end{align*}
Here the boundary terms are understood in the sense of traces.
Note that~$H_\eps$ is not self-adjoint unless $\alpha=0$
(in this case $H_\eps$ coincides with the Neumann Laplacian
in the layer~$\Omega_\eps$).
The adjoint of~$H_\eps$ is determined by simply changing
$\alpha$ to $-\alpha$ (or $\iu$ to $-\iu$)
in the definition of~$h_\eps$.
Moreover, $H_\eps$~is $\mathcal{T}$-self-adjoint \cite[Sec.~III.5]{Edmunds-Evans}
(or complex-symmetric~\cite{Garcia-Putinar}),
\ie~$H_\e^*=\mathcal{T}H_\e\mathcal{T}$.

The form~$h_\eps$ is well defined
under the mere condition that~$\alpha$ is bounded.
However, if we strengthen the regularity to $\a \in \Hinf^1(\RR^{d-1})$,
it can be shown by standard procedures (\cf~\cite[Sec.~3]{BK})
that~$H_\eps$ coincides with the operator described in the introduction,
\ie, it acts as the (distributional) Laplacian~\eqref{op}
on the domain formed by the functions~$u$ from $\H^2(\Omega_\eps)$
satisfying the boundary conditions~\eqref{bc} in the sense of traces.

The operator $H_0$~is clearly non-negative.
An analogous property for~$H_\eps$ is contained
in the following result
\begin{equation}\label{spectrum}
  \sigma(H_\eps) \subset
  \left\{
  z\in\CC \ \big| \ \RE z\geqslant 0, \,
  |\IM z|\leqslant 2\,\|\a\|_\infty \sqrt{\RE z}
  \right\}
  .
\end{equation}
Here and in the sequel we denote by $\|\cdot\|_\infty$ the supremum norm.
\eqref{spectrum}~can be proved exactly in the same way as in \cite[Lem.~3.1]{BK}
for $d=2$ by estimating the numerical range of~$H_\eps$.
In particular, the open left half-plane of~$\CC$ belongs
to the resolvent set of both~$H_\eps$ and~$H_0$.

Another general spectral property of~$H_\eps$, common with~$H_0$,
is that its residual spectrum is empty.
This is a consequence of the $\mathcal{T}$-self-adjointness
property of~$H_\eps$ as pointed out in~\cite[Corol.~2.1]{BK}.

Since $H_\eps$ and $H_0$ act on different Hilbert spaces,
we need to explain how the convergence of the corresponding
resolvent operators is understood.
We decompose our Hilbert space
into an orthogonal sum
\begin{equation}\label{direct}
  \sii(\Omega_\eps) = \mathfrak{H}_\eps \oplus \mathfrak{H}_\eps^\bot
  \,,
\end{equation}
where the subspace~$\mathfrak{H}_\eps$ consists of functions
from $\sii(\Omega_\eps)$ of the form $x \mapsto \psi(x')$,
\ie\ independent of the ``transverse'' variable~$x_d$.
The corresponding projection is given by
\begin{equation}\label{1.6}
  (P_\e u)(x)
  := \frac{1}{\e}\int\limits_{0}^{\e} u(x)\di x_d
\end{equation}
and it can be viewed as a projection
onto a constant function in the transverse variable.
We also write $P_\eps^\bot:=I-P_\eps$.
Since the functions from~$\mathfrak{H}_\eps$
depend on the ``longitudinal'' variables~$x'$ only,
$\mathfrak{H}_\eps$~can be naturally identified with $\sii(\RR^{d-1})$.
Hence, with an abuse of notations,
we may identify any operator on $\sii(\RR^{d-1})$
as the operator acting on $\mathfrak{H}_\eps \subset \sii(\Omega_\eps)$,
and vice versa.

The norm and the inner product in $\sii(\Omega_\eps)$
will be denoted by $\|\cdot\|_\eps$ and $(\cdot,\cdot)_\eps$, respectively.
We keep the same notation $\|\cdot\|_\eps$
for the operator norm on $\sii(\Omega_\eps)$.
The norm and the inner product in $\sii(\RR^{d-1})$
will be denoted by $\|\cdot\|$ and $(\cdot,\cdot)$, respectively,
\ie\ without the subscript~$\eps$.
All the inner products are assumed to be linear in the first component.
Finally, we denote the norm in $\H^1(\Om_\e)$ by $\|\cdot\|_{\e,1}$
and we keep the same notation for the norm of
bounded operators from $L_2(\Om_\e)$ to $\H^1(\Om_\e)$.

Now we are in a position to formulate the main result of this paper.
\begin{theorem}\label{th1.2}
Assume $\alpha \in \Hinf^1(\RR^{d-1})$.
Then the inequalities
\begin{align}
\big\|(H_\eps+1)^{-1}-(H_0+1)^{-1}P_\e\big\|_\eps
&\leqslant C \, \e,
\label{1.7}
\\
\big\|(H_\eps+1)^{-1}-(1+Q)(H_0+1)^{-1}P_\e\big\|_{\eps,1}
&\leqslant C(\eps) \, \e
\label{1.8}
\end{align}
hold true, where $Q(x):=-\iu \;\! \a(x') \;\! x_d$ and
\begin{align*}
C&:=\sqrt{\frac{1}{\pi^2}
  + \frac{(\|\nabla'\alpha\|_\infty +2\|\alpha\|_\infty)^2}{3}}
  \,,
  \\
C(\e)&:=\sqrt{
\frac{1}{\pi^2} + \left(
\frac{\|\nabla'\alpha\|_\infty+\|\alpha\|_\infty}{\sqrt{3}}
+ C_1(\e)
\right)^2
}
  \,,
  \\
  C_1(\e) &:= \sqrt{
\left(\frac{\eps \|\alpha\|_\infty^2}{2\sqrt{5}}\right)^2
+ \left(\frac{\eps \|\alpha\|_\infty^2}{2\sqrt{5}}
+ \frac{\|\a\|_\infty \sqrt{\|\a\|_\infty^2+\|\nabla'\a\|_\infty^2 \, \eps^2}}{\sqrt{3}}\right)^2
}
  \,.
\end{align*}
\end{theorem}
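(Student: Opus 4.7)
The strategy is to construct an explicit ansatz approximating $u_\eps:=(H_\eps+1)^{-1}f$ for $f\in\sii(\Omega_\eps)$ and to control the error using coercivity of the sesquilinear form $h_\eps$. Setting $v:=(H_0+1)^{-1}P_\eps f$, identified with an $x_d$-independent element of $\sii(\Omega_\eps)$, I adopt the ansatz
$$U_\eps(x):=(1+Q(x))\,v(x')=(1-\iu\alpha(x')\,x_d)\,v(x').$$
Two features motivate this choice: $U_\eps$ satisfies the Robin-type condition $\partial_{x_d} u+\iu\alpha u=0$ exactly on the bottom face $\{x_d=0\}$, with only an $O(\eps)$ defect on the top face $\{x_d=\eps\}$; and after an integration by parts in $x_d$ the transverse derivative $\partial_{x_d}((1+Q)v)=-\iu\alpha v$ cancels the imaginary boundary terms of $h_\eps$, promoting the $\alpha^2$-potential of $H_0$ from the boundary into the interior.

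The first ingredient of the proof is the coercivity identity
$$\RE\bigl(h_\eps[\phi]+\|\phi\|_\eps^2\bigr)=\|\phi\|_{\eps,1}^2\quad\text{for every }\phi\in\H^1(\Omega_\eps),$$
which holds because the $\alpha$-boundary terms of $h_\eps$ are purely imaginary; it implies that for any anti-linear functional $\ell$ on $\H^1(\Omega_\eps)$ the equation $h_\eps[w,\phi]+(w,\phi)_\eps=\ell(\phi)$ has a unique solution $w\in\H^1(\Omega_\eps)$ with $\|w\|_{\eps,1}\leqslant\|\ell\|_{\H^{-1}}$. The main computation is then to evaluate the residual $R[\phi]:=h_\eps[U_\eps,\phi]+(U_\eps,\phi)_\eps-(f,\phi)_\eps$. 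Testing the weak formulation of $(H_0+1)v=P_\eps f$ against $P_\eps\phi$ (which eliminates the bulk gradient term of $h_\eps[v,\phi]$), performing the integration-by-parts cancellation on the bottom face, and rewriting the top-face boundary defect as a bulk integral via the identity $\eps[\phi(\cdot,\eps)-P_\eps\phi]=\int_0^\eps x_d\,\partial_{x_d}\phi\,dx_d$, one obtains
\begin{align*}
R[\phi]={}& -\int_{\Omega_\eps}P_\eps^\bot f\,\bar\phi\,dx+\int_{\Omega_\eps}\alpha^2\,x_d\,v\,\overline{\partial_{x_d}\phi}\,dx\\
&-\iu\int_{\Omega_\eps}x_d\,(v\,\nabla'\alpha+\alpha\,\nabla'v)\cdot\overline{\nabla'\phi}\,dx-\iu\int_{\Omega_\eps}\alpha\,x_d\,v\,\bar\phi\,dx,
\end{align*}
a sum of four manifestly $O(\eps)$ contributions.

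Each term of $R[\phi]$ is estimated by three elementary tools: the one-dimensional Poincar\'e inequality $\|\phi-P_\eps\phi\|_{L_2(0,\eps)}\leqslant\frac{\eps}{\pi}\|\partial_{x_d}\phi\|_{L_2(0,\eps)}$ (responsible for the $1/\pi^2$ contributions to the announced constants); the elementary identity $\int_0^\eps x_d^2\,dx_d=\eps^3/3$ (responsible for the $1/3$ factors); and the a priori bound $\|v\|_{\H^1(\RR^{d-1})}\leqslant\|P_\eps f\|_{L_2(\RR^{d-1})}\leqslant\eps^{-1/2}\|f\|_\eps$ coming from the coercivity of $h_0$. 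Grouping the resulting bounds according to whether they multiply $\|\phi\|_\eps$, $\|\nabla'\phi\|_\eps$, or $\|\partial_{x_d}\phi\|_\eps$, and applying Cauchy--Schwarz in $\RR^3$ against $\|\phi\|_{\eps,1}^2=\|\phi\|_\eps^2+\|\nabla'\phi\|_\eps^2+\|\partial_{x_d}\phi\|_\eps^2$, produces $|R[\phi]|\leqslant C(\eps)\,\eps\,\|f\|_\eps\,\|\phi\|_{\eps,1}$; combined with the coercivity lemma applied to $\ell:=-R$, this gives~\eqref{1.8}. For~\eqref{1.7} I write $u_\eps-v=(u_\eps-U_\eps)+Qv$ and estimate $\|Qv\|_\eps\leqslant\frac{\eps}{\sqrt{3}}\|\alpha\|_\infty\|f\|_\eps$ directly, which accounts for the extra $\|\alpha\|_\infty$ appearing in the coefficient $2\|\alpha\|_\infty$ of $C$. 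The main obstacle is to bound $\|u_\eps-U_\eps\|_\eps$ without incurring the spurious $\eps$-dependent correction $C_1(\eps)$ -- the na\"ive inequality $\|\cdot\|_\eps\leqslant\|\cdot\|_{\eps,1}$ is too lossy. I would resolve this via a duality argument, writing $\|u_\eps-U_\eps\|_\eps=\sup_{\|g\|_\eps=1}|R[\tilde\phi]|$ with $\tilde\phi:=(H_\eps^*+1)^{-1}g$, which satisfies $\|\tilde\phi\|_{\eps,1}\leqslant\|g\|_\eps$ together with the adjoint boundary condition $\partial_{x_d}\tilde\phi-\iu\alpha\tilde\phi=0$; these finer properties of $\tilde\phi$ allow a sharper treatment of the $\int\alpha^2\,x_d\,v\,\overline{\partial_{x_d}\tilde\phi}$ contribution and recover the tighter constant $C$.
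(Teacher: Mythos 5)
Your ansatz $U_\eps=(1+Q)v=(1-\iu\alpha x_d)v$ is \emph{not} the one the paper uses, and the difference is the crux. The paper's proof is built around the \emph{exponential} ansatz $u_0=\E^{-\iu\alpha(x')x_d}w_0(x')+w_1$: the multiplier $\E^{-\iu\alpha x_d}$ satisfies the Robin condition $\partial_d(\E^{-\iu\alpha x_d}w_0)+\iu\alpha\,\E^{-\iu\alpha x_d}w_0=0$ \emph{exactly on both faces}, and $|\partial_d(\E^{-\iu\alpha x_d}w_0)|^2=\alpha^2|w_0|^2$ reproduces the potential of $H_0$ identically. As a consequence, the residual $F_\eps$ governing $w_1$ contains only three terms, each proportional to $\|\nabla'\alpha\|_\infty$ or to $\|\alpha\|_\infty$ \emph{to the first power}, which is how the clean $\eps$-independent constant $C_0=(\|\nabla'\alpha\|_\infty+\|\alpha\|_\infty)/\sqrt{3}$ in Lemma~\ref{Lem.w1} appears; the $\eps$-dependent piece $C_1(\eps)$ in $C(\eps)$ comes \emph{solely} from the elementary Taylor bounds \eqref{2.11}--\eqref{2.12} on $\E^{-\iu\alpha x_d}-1+\iu\alpha x_d$, and for the $L_2$-estimate \eqref{1.7} this error is simply of order $\eps^2$ and harmless, so the paper never needs a duality argument.

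Your linear ansatz has a boundary defect $\alpha^2\eps\,v$ on $\{x_d=\eps\}$, which (as you correctly compute) surfaces in the residual as the bulk term $\int\alpha^2 x_d v\,\overline{\partial_d\phi}$. This term carries an extra factor of $\|\alpha\|_\infty$ relative to anything appearing in the paper's $F_\eps$, and after Cauchy--Schwarz it lands in the same group as the $P_\eps^\bot f$ contribution $(\eps/\pi)\|P_\eps^\bot f\|_\eps\|\partial_d\phi\|_\eps$. The two contributions to the $\partial_d\phi$-group are therefore not orthogonal in the way the paper's decomposition $\|P_\eps f\|_\eps^2+\|P_\eps^\bot f\|_\eps^2=\|f\|_\eps^2$ is exploited, and the resulting constant does not reduce to the stated $C(\eps)$ (nor, for \eqref{1.7}, to $C$). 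In other words your route proves \emph{a} norm-resolvent estimate of order $\eps$, but not the theorem with the advertised explicit constants, and the gap is structural, not a matter of bookkeeping.

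The second gap is the proposed duality repair for \eqref{1.7}. You observe, correctly, that adding $\|Qv\|_\eps\leqslant(\eps/\sqrt{3})\|\alpha\|_\infty\|P_\eps f\|_\eps$ on top of $\|u_\eps-U_\eps\|_\eps\leqslant\|u_\eps-U_\eps\|_{\eps,1}$ overshoots $C$ by the $\ell^2$-triangle inequality. But the Aubin--Nitsche mechanism you invoke does not apply here: there is no Galerkin orthogonality, $\tilde\phi=(H_\eps^*+1)^{-1}g$ is an \emph{exact} adjoint solution (not an approximation to be compared with a projection), so $\|\tilde\phi\|_{\eps,1}\leqslant\|g\|_\eps$ gives back precisely the same bound $|R[\tilde\phi]|\leqslant M\|\tilde\phi\|_{\eps,1}\leqslant M$ as before, with no gain of a power of $\eps$. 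You gesture at using the adjoint Robin condition $\partial_d\tilde\phi-\iu\alpha\tilde\phi=0$ to sharpen the $\alpha^2 x_d v\,\overline{\partial_d\tilde\phi}$ term, but you do not say how, and integrating by parts merely reproduces a top-face boundary term together with a bulk $\alpha^2 v\,\overline{\tilde\phi}$ term; nothing obviously cancels. The paper sidesteps this entirely: because $w_1$ and $u_1$ are already bounded in $\H^1$ with $\eps$-independent constants, and the piece $(\E^{-\iu\alpha x_d}-1)w_0$ is bounded directly in $L_2$ via \eqref{2.10}, the decomposition $u-w_0=u_1+w_1+(\E^{-\iu\alpha x_d}-1)w_0$ gives exactly $C$ with no refinement needed. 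To salvage your proposal you would either need to carry out and verify the duality cancellation in detail, or switch to the exponential ansatz.
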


Let us discuss the result of this theorem.
It says that the operator~$H_\eps$ converges to~$H_0$
in the norm resolvent sense.
Note that, contrary to what happens for instance in the case of
uniform Dirichlet boundary conditions,
here we can choose the spectral parameter fixed
(\eg~$-1\in\rho(H_\eps)\cap\rho(H_0)$ as in the theorem)
and still get a non-trivial result.

If we treat the convergence of the resolvents
in the topology of bounded operators in $L_2(\Om_\e)$,
the estimate~\eqref{1.7} says that the rate of the convergence
is of order~$\Odr(\e)$.
At the same time, if we consider the convergence
as for the operators acting from $L_2(\Om_\e)$ into $\H^1(\Om_\e)$,
to keep the same rate of the convergence,
one has to use the function~$Q$.
This functions is to be understood as a corrector
needed to have the convergence in a stronger norm.
Such situation is well-known and it often happens
for singularly perturbed problems,
especially in the homogenization theory,
see, \eg, \cite{Borisov-Bunoiu-Cardone_2011,Birman-Suslina_2007,Zhikov_2006}.

\section{Proof of Theorem~\ref{th1.2}}\label{Sec.proof}
%
Throughout this section we assume $\alpha \in \Hinf^1(\RR^{d-1})$.
With an abuse of notation, we denote by the same symbol~$\alpha$
both the function on $\RR^{d-1}$
and its natural extension $x\mapsto\alpha(x')$ to $\RR^d$.

We start with two auxiliary lemmata.
The first tells us that the subspace~$\mathfrak{H}_\eps^\bot$
is negligible for~$H_\eps$ in the limit as $\eps \to 0$.
\begin{lemma}\label{lm2.1}
For any $f \in \sii(\Omega_\eps)$, we have
\begin{equation}\label{my.2.3}
  \big\|(H_\eps+1)^{-1} P_\eps^\bot f\big\|_{\eps,1}
  \leqslant \frac{\e}{\pi} \, \|P_\eps^\bot f\|_\eps
  \,.
\end{equation}
\end{lemma}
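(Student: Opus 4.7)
My plan is an energy estimate that exploits the crucial fact that the non-Hermitian contributions to $h_\eps$ are purely imaginary. I set $u:=(H_\eps+1)^{-1}P_\eps^\bot f\in \H^1(\Omega_\eps)$, so that
$$h_\eps(u,v)+(u,v)_\eps=(P_\eps^\bot f,v)_\eps \qquad \text{for every } v\in \H^1(\Omega_\eps).$$
Testing with $v=u$ and taking real parts annihilates both boundary integrals (since $\alpha$ is real-valued), leaving the coercive identity
$$\|u\|_{\eps,1}^2 \;=\; \|\nabla u\|_\eps^2 + \|u\|_\eps^2 \;=\; \RE(P_\eps^\bot f,u)_\eps.$$
This step is the whole content of the lemma: despite $H_\eps$ being non-self-adjoint, the real part of its form gives the standard $\H^1$-coercive estimate with the same constants as in the Neumann case.

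For the right-hand side I would use the orthogonality of the splitting~\eqref{direct}: because $P_\eps u\in\mathfrak{H}_\eps$ while $P_\eps^\bot f\in\mathfrak{H}_\eps^\bot$, one has $(P_\eps^\bot f,u)_\eps=(P_\eps^\bot f,P_\eps^\bot u)_\eps$, whence Cauchy--Schwarz yields
$$\bigl|\RE(P_\eps^\bot f,u)_\eps\bigr|\leqslant \|P_\eps^\bot f\|_\eps\,\|P_\eps^\bot u\|_\eps.$$

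The last ingredient is the sharp one-dimensional Poincaré inequality applied slice by slice. By the definition~\eqref{1.6} of $P_\eps$, for almost every $x'\in\RR^{d-1}$ the function $P_\eps^\bot u(x',\cdot)\in \H^1(0,\eps)$ has vanishing mean on $(0,\eps)$; the optimal Poincaré constant for zero-mean functions on an interval of length $\eps$ is $\eps/\pi$, giving
$$\int_0^\eps|P_\eps^\bot u(x',x_d)|^2\,dx_d \;\leqslant\; \frac{\eps^2}{\pi^2}\int_0^\eps|\partial_{x_d}P_\eps^\bot u(x',x_d)|^2\,dx_d.$$
Integrating in $x'$, and using $\partial_{x_d}P_\eps u=0$ so that $\partial_{x_d}P_\eps^\bot u=\partial_{x_d}u$, one obtains $\|P_\eps^\bot u\|_\eps\leqslant(\eps/\pi)\|\partial_{x_d}u\|_\eps\leqslant(\eps/\pi)\|u\|_{\eps,1}$. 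Combining the three displayed estimates yields $\|u\|_{\eps,1}\leqslant(\eps/\pi)\|P_\eps^\bot f\|_\eps$, which is~\eqref{my.2.3}. There is no genuine obstacle; the only delicate observation is the cancellation of the imaginary boundary terms in $\RE h_\eps$, without which neither the coercivity identity nor the sharp $\Odr(\eps)$ rate would be accessible.
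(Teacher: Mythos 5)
Your proof is correct and follows essentially the same path as the paper's: the energy identity from taking the real part of the tested resolvent equation, the orthogonality $(P_\eps^\bot f,u)_\eps=(P_\eps^\bot f,P_\eps^\bot u)_\eps$, and the sharp transverse Poincar\'e bound $\|P_\eps^\bot u\|_\eps\leqslant(\eps/\pi)\|\partial_{x_d}u\|_\eps$ (which the paper phrases as the variational characterization of the second Neumann eigenvalue on $(0,\eps)$). The only cosmetic difference is the order of the final algebra — you cancel one factor of $\|u\|_{\eps,1}$ directly, whereas the paper first isolates $\|P_\eps^\bot u\|_\eps\leqslant(\eps/\pi)^2\|P_\eps^\bot f\|_\eps$ and then back-substitutes; both yield the same constant.
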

\begin{proof}
For any fixed $f \in \sii(\Omega_\eps)$,
let us set
$
  u:=(H_\eps+1)^{-1}P_\eps^\bot f
  \in \Dom(H_\eps) \subset \H^1(\Om_\e)
$.
In other words, $u$~satisfies the resolvent equation
\begin{equation*}
  \forall v \in \H^1(\Om_\e) \,, \qquad
  h_\eps(u,v)+ (u,v)_\eps
  = (P_\eps^\bot f,v)_\eps
  \,,
\end{equation*}
where $h_\e(\cdot,\cdot)$ denotes the sesquilinear form
associated with the quadratic form $h_\e[\cdot]$.
Choosing~$u$ for the test function~$v$
and taking the real part of the obtained identity,
we get
\begin{equation}\label{my.2.5}
  \|u\|_{\eps,1}^2
  = \RE \big(P_\eps^\bot f,u\big)_\eps
  = \RE \big(P_\eps^\bot f,P_\eps^\bot u\big)_\eps
  \leqslant\big\|P_\eps^\bot f\|_\e
  \|P_\eps^\bot u\|_\eps
  \,.
\end{equation}
Employing the decomposition
$
  u = P_\eps u + P_\eps^\bot u
$,
the left hand side of~\eqref{my.2.5} can be estimated as follows
\begin{equation}\label{my.2.5.1st}
  \|u\|_{\eps,1}^2
  \geqslant\|\nabla u\|_\eps^2
  \geqslant\|\partial_d u\|_\eps^2
  = \|\partial_d P_\eps^\bot u\|_\eps^2
  \geqslant(\pi/\eps)^2 \;\! \|P_\eps^\bot u\|_\eps^2
  \,.
\end{equation}
Here the last inequality follows from the variational
characterization of the second eigenvalue of the Neumann
Laplacian on $\sii((0,\eps))$ and Fubini's theorem.
Combining~\eqref{my.2.5.1st} with~\eqref{my.2.5}, we obtain
$$
  \|P_\eps^\bot u\|_\eps
  \leqslant(\eps/\pi)^2 \;\! \|P_\eps^\bot f\|_\eps
  \,.
$$
Finally, applying the obtained inequality
to the right hand side of~\eqref{my.2.5},
we conclude with
$$
  \|u\|_{\eps,1}^2
  \leqslant(\eps/\pi)^2 \;\! \|P_\eps^\bot f\|_\eps^2
  \,.
$$
This is equivalent to the estimate~\eqref{my.2.3}.
\end{proof}

In the second lemma we collect some elementary estimates
we shall need later on.
\begin{lemma}\label{lm2.2}
We have
\begin{align}
  |\E^{-\iu\a x_d}-1|
  &\leqslant \|\a\|_\infty \, x_d
  \,,
  \label{2.10} \\
  |\E^{-\iu\a x_d}-1+\iu\a x_d|
  &\leqslant \frac{1}{2} \, \|\a\|_\infty^2 \, x_d^2
  \,,
  \label{2.11} \\
  |\nabla(\E^{-\iu\a x_d}-1+\iu\a x_d)|
  &\leqslant \|\a\|_\infty \, x_d \,
  \sqrt{\|\a\|_\infty^2+\|\nabla'\a\|_\infty^2 \, x_d^2}
  \,.
  \label{2.12}
\end{align}
\end{lemma}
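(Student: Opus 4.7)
The three inequalities are all elementary consequences of the fact that $|e^{-i\alpha x_d}|=1$ (because $\alpha$ is real-valued), combined with suitable Taylor remainder formulas for the complex exponential; the strategy is purely computational and uses no input from the preceding analysis.

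For \eqref{2.10}, the plan is to use the integral representation
\begin{equation*}
  \E^{-\iu\a x_d} - 1 = -\iu\a x_d \int_0^1 \E^{-\iu t\a x_d}\,dt,
\end{equation*}
and note that the modulus of the integrand is identically $1$; the estimate then reduces to $|\a|\leqslant\|\a\|_\infty$. For \eqref{2.11}, I would apply the analogous second-order remainder
\begin{equation*}
  \E^{-\iu\a x_d} - 1 + \iu\a x_d = -\a^2 x_d^2 \int_0^1 (1-t)\,\E^{-\iu t\a x_d}\,dt,
\end{equation*}
and bound the integral of $(1-t)$ on $[0,1]$ by $1/2$.

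For \eqref{2.12}, which is the only estimate requiring a short computation, I would differentiate $f(x):=\E^{-\iu\a(x')x_d}-1+\iu\a(x')x_d$ componentwise, exploiting the cancellation produced by the linear term $\iu\a x_d$. For $j<d$ one gets
\begin{equation*}
  \p_j f = \iu x_d\,(\p_j\a)\bigl(1-\E^{-\iu\a x_d}\bigr),
\end{equation*}
while the normal component is
\begin{equation*}
  \p_d f = \iu\a\bigl(1-\E^{-\iu\a x_d}\bigr).
\end{equation*}
Applying \eqref{2.10} to the common factor $|1-\E^{-\iu\a x_d}|\leqslant\|\a\|_\infty x_d$ and assembling the Euclidean norm of the gradient then yields
\begin{equation*}
  |\nabla f|^2 \leqslant \|\a\|_\infty^2 x_d^2\bigl(\|\nabla'\a\|_\infty^2 x_d^2+\|\a\|_\infty^2\bigr),
\end{equation*}
which is exactly \eqref{2.12} after taking the square root.

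There is no real obstacle here: the only point requiring attention is noticing the cancellation in the gradient computation for \eqref{2.12}, which is what makes $|\nabla f|$ vanish linearly in $x_d$ rather than being merely bounded. Everything else is routine bounding of moduli of integrals of unimodular functions.
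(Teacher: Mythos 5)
Your proof is correct and follows essentially the same route as the paper: the paper declares \eqref{2.10} and \eqref{2.11} elementary and leaves them to the reader (your integral-remainder formulas are the standard way to fill this in), and for \eqref{2.12} it invokes exactly the gradient identity $\nabla(\E^{-\iu\a x_d}-1+\iu\a x_d)=\iu(1-\E^{-\iu\a x_d})\bigl(x_d\nabla'\a,\,\a\bigr)$ together with \eqref{2.10}, which is precisely your componentwise computation and assembly of the Euclidean norm.
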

\begin{proof}
The estimates~\eqref{2.10} and~\eqref{2.11} are elementary
and we leave the proofs to the reader.
The last estimate~\eqref{2.12} follows from~\eqref{2.10}
and the identity
\begin{equation*}
\nabla(\E^{-\iu\a x_d}-1+\iu\a x_d)=\iu(1-\E^{-\iu\a x_d})
\begin{pmatrix}
x_d\nabla'\a
\\
\a
\end{pmatrix}
\end{equation*}
taken into account.
\end{proof}

We continue with the proof of Theorem~\ref{th1.2}.
Let $f \in \sii(\Omega_\eps)$.
Accordingly to~\eqref{direct}, $f$~admits the decomposition
$
  f = P_\eps f + P_\eps^\bot f
$
and we have
\begin{equation}\label{f.direct}
  \|f\|_\eps^2
  =\|P_\eps f\|_\eps^2 +\|P_\eps^\bot f\|_\eps^2
  =\e\;\!\|P_\eps f\|^2 +\|P_\eps^\bot f\|_\eps^2
  \,.
\end{equation}
We define $u:=(H_\eps+1)^{-1}f$ and make the decomposition
\begin{equation}\label{u.decomposition}
  u=u_0+u_1
  \qquad\mbox{with}\qquad
  u_0 := (H_\eps+1)^{-1}P_\eps f
  \,, \quad
  u_1 := (H_\eps+1)^{-1}P_\eps^\bot f
  \,.
\end{equation}
In view of Lemma~\ref{lm2.1},
$u_1$~is negligible in the limit as $\eps \to 0$,
\begin{equation}\label{u1.negligible}
  \|u_1\|_{\eps,1} \leqslant \frac{\e}{\pi} \, \|P_\eps^\bot f\|_\eps
  \,.
\end{equation}
It remains to study the dependence of~$u_0$ on~$\e$.
We construct $u_0$ as follows
\begin{equation}\label{u0.decomposition}
  u_0(x) = \E^{-\iu \a(x')x_d} \;\! w_0(x') + w_1(x)
  \,,\qquad\mbox{where}\qquad
  w_0:=(H_0+1)^{-1} P_\eps f
\end{equation}
and~$w_1$ is a function defined by this decomposition.

First, we establish a rather elementary bound for~$w_0$.
\begin{lemma}\label{Lem.w0}
We have
$$
  \|w_0\|_{\eps,1} \leqslant\|P_\eps f\|_\eps
  \,.
$$
\end{lemma}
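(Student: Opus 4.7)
The plan is to exploit the fact that $w_0$ depends only on the longitudinal variable $x'$ and to reduce the $\H^1(\Om_\e)$-estimate to a standard resolvent estimate for the self-adjoint operator $H_0$ on $\sii(\RR^{d-1})$, tested against $w_0$ itself.

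First I would unpack the norms. Since $w_0$ is independent of $x_d$, Fubini gives
\begin{equation*}
  \|w_0\|_{\eps,1}^2
  = \|w_0\|_\eps^2 + \|\nabla w_0\|_\eps^2
  = \e\bigl(\|w_0\|^2 + \|\nabla' w_0\|^2\bigr),
\end{equation*}
where on the right-hand side $w_0$ is regarded as an element of $\H^1(\RR^{d-1})$. Similarly, if I set $g := P_\e f \in \mathfrak H_\e \cong \sii(\RR^{d-1})$, then $\|P_\e f\|_\eps^2 = \e\|g\|^2$. Hence the desired inequality is equivalent to the $\H^1(\RR^{d-1})$-bound
\begin{equation*}
  \|w_0\|^2 + \|\nabla' w_0\|^2 \leqslant \|g\|^2.
\end{equation*}

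To establish this, I would use that $w_0 = (H_0+1)^{-1} g$ satisfies the variational identity obtained by testing the resolvent equation against $w_0$ itself. Because $H_0$ is self-adjoint and $h_0[w_0]$ is real, this yields
\begin{equation*}
  \|\nabla' w_0\|^2 + \|\alpha w_0\|^2 + \|w_0\|^2 = (g,w_0).
\end{equation*}
Dropping the nonnegative potential term and applying Cauchy--Schwarz gives
\begin{equation*}
  \|\nabla' w_0\|^2 + \|w_0\|^2 \leqslant \|g\|\,\|w_0\|,
\end{equation*}
which, combined with the obvious consequence $\|w_0\|\leqslant\|g\|$, delivers the target bound $\|\nabla' w_0\|^2 + \|w_0\|^2 \leqslant \|g\|^2$. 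Multiplying by $\e$ concludes the proof.

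There is really no obstacle here: the lemma is essentially the statement that the resolvent of a nonnegative self-adjoint operator at the point $-1$ is a contraction on $\H^1$, lifted from $\RR^{d-1}$ to $\Om_\e$ by the trivial $x_d$-extension. The only thing to be careful about is the bookkeeping of the factor $\e$ coming from integration in the transverse direction, which is handled uniformly by the identification $\mathfrak H_\e \cong \sii(\RR^{d-1})$ with the rescaling $\|\,\cdot\,\|_\eps^2 = \e\|\,\cdot\,\|^2$ on that subspace.
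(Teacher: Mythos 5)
Your proof is correct and follows essentially the same route as the paper: test the resolvent identity for $w_0$ against $w_0$ itself, deduce $\|w_0\|\leqslant\|P_\eps f\|$, bootstrap to $\|\nabla' w_0\|^2+\|w_0\|^2\leqslant\|P_\eps f\|^2$, and lift to $\Om_\e$ by the $x_d$-independence of $w_0$. The only cosmetic difference is that you unpack the $\e$-factors at the outset, whereas the paper reintegrates over $(0,\eps)$ at the end.
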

\begin{proof}
By definition, $w_0$~satisfies the resolvent equation
\begin{equation}\label{w0.identity}
  \forall v \in \H^1(\RR^{d-1}) \,, \qquad
  h_0(w_0,v)+ (w_0,v)
  = (P_\eps f,v)
  \,,
\end{equation}
where $h_0(\cdot,\cdot)$ denotes the sesquilinear form
associated with the quadratic form $h_0[\cdot]$.
Choosing~$w_0$ for the test function~$v$,
we get
\begin{equation}\label{w0.equation}
  \|\nabla' w_0\|^2 + \|\alpha w_0\|^2 + \|w_0\|^2
  = (P_\eps f, w_0)
  \leqslant\|P_\eps f\| \|w_0\|
  \,.
\end{equation}
In particular,
$$
  \|w_0\| \leqslant\|P_\eps f\|
  \,.
$$
Using this estimate in the right hand side of~\eqref{w0.equation},
we get
$$
  \|\nabla' w_0\|^2 + \|w_0\|^2
  \leqslant\|P_\eps f\|^2
  \,.
$$
Reintegrating this inequality over $(0,\eps)$,
we conclude with the desired bound in~$\Omega_\eps$.
\end{proof}

It is more difficult to get a bound for~$w_1$.
\begin{lemma}\label{Lem.w1}
We have
$$
  \|w_1\|_{\eps,1}
  \leqslant C_1 \, \eps \, \|P_\eps f\|_\eps
  \qquad\mbox{with}\qquad
  C_0:= \frac{\|\nabla'\alpha\|_\infty
  + \|\alpha\|_\infty}{\sqrt{3}}
  \,.
$$
\end{lemma}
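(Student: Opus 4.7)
The plan is as follows. Since $u_0 \in \Dom(H_\eps)$ satisfies the resolvent identity $h_\eps(u_0,v)+(u_0,v)_\eps = (P_\eps f,v)_\eps$ for every $v \in \H^1(\Om_\e)$, and the ansatz function $\phi(x):=\E^{-\iu\a(x')x_d}w_0(x')$ belongs to $\H^1(\Om_\e)$ (with $w_0$ as in \eqref{u0.decomposition}), the remainder $w_1=u_0-\phi$ lies in $\H^1(\Om_\e)$ and obeys
\begin{equation*}
  h_\eps(w_1,v)+(w_1,v)_\eps \;=\; L(v) \;:=\; (P_\eps f,v)_\eps - h_\eps(\phi,v) - (\phi,v)_\eps
\end{equation*}
for all $v\in\H^1(\Om_\e)$. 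Because the boundary contributions of $h_\eps$ are purely imaginary (as $\a$ is real-valued), testing with $v=w_1$ and taking the real part yields the basic coercivity
$
  \|w_1\|_{\e,1}^2 = \RE L(w_1) \leqslant |L(w_1)|,
$
so the entire problem reduces to bounding $|L(v)|$ by a constant times $\e\,\|P_\e f\|_\e\,\|v\|_{\e,1}$.

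Next I would expand $L(v)$ by writing $\E^{-\iu\a x_d}=1-\iu\a x_d+R$, with $R:=\E^{-\iu\a x_d}-1+\iu\a x_d$, so that $\phi$ decomposes into three pieces $\phi_1=w_0$, $\phi_2=-\iu\a x_d\,w_0$ and $\phi_3=R\,w_0$. The gauge is designed so that two cancellations occur. First, the Robin boundary integrals for $\phi_1$, converted to $\iu\int_{\Om_\e}\a w_0\,\partial_d\overline v$ via the fundamental theorem of calculus in $x_d$, are cancelled exactly by the $\partial_d\phi_2\,\partial_d\overline v$ contribution. Second, the remaining $\nabla'w_0\cdot\nabla'\overline v + w_0\overline v$ terms are absorbed by applying the weak resolvent equation for $w_0$ with the test function $P_\e v\in\H^1(\RR^{d-1})$, which introduces the compensating $-\int_{\Om_\e}\a^2 w_0\overline v$. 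This term in turn pairs with the boundary contribution $\e\int_{x_d=\e}\a^2 w_0\overline v$ coming from $h_\eps(\phi_2,v)$, and their difference equals $\int_{\R^{d-1}}\a^2 w_0\int_0^\e\int_{x_d}^\e\partial_s\overline v\,ds\,dx_d\,dx'$, which is already of order $\e$.

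After these cancellations, $L(v)$ is a sum of explicit residuals, each of which carries an $x_d$-factor: the cross terms $-\iu\int x_d[(\nabla'\a)w_0+\a\nabla'w_0]\cdot\nabla'\overline v$ and $-\iu\int\a x_d\,w_0\overline v$ from $\phi_2$, together with $h_\e(\phi_3,v)+(\phi_3,v)_\e$ involving $R$ and $\nabla R$. These are estimated pointwise via Lemma \ref{lm2.2} (using $|\E^{-\iu\a x_d}-1|\leqslant\|\a\|_\infty x_d$, $|R|\leqslant \tfrac12\|\a\|_\infty^2 x_d^2$, and the gradient bound \eqref{2.12}). Integrating in $x_d\in(0,\e)$ produces powers of $\e$ (for example, $\int_0^\e x_d^2\,dx_d=\e^3/3$), which combine with $\|w_0\|^2=\e^{-1}\|w_0\|_\e^2$ to give one clean factor of $\e$. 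The a~priori bounds $\|w_0\|_{\e,1}\leqslant\|P_\e f\|_\e$ (Lemma~\ref{Lem.w0}) and $\|\a w_0\|\leqslant\|P_\e f\|$ (read off from inequality \eqref{w0.equation} in its proof) control every remaining factor in terms of $\|P_\e f\|_\e$, while $\|v\|_\e$ and $\|\nabla v\|_\e$ are both absorbed in $\|v\|_{\e,1}$.

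The main obstacle is the bookkeeping in the second step: one has to simultaneously exploit the pointwise Robin compatibility of the gauged ansatz $\phi$ and the weak resolvent identity for $w_0$ in the correct test-function version ($V=P_\e v$) so that all $\Odr(1)$ contributions cancel, leaving only $\Odr(\e)$ residuals. Once this is done, Lemma \ref{lm2.2} and Lemma \ref{Lem.w0} make the final estimate routine: substituting $v=w_1$ into $|L(v)|\leqslant C_0\,\e\,\|P_\e f\|_\e\,\|v\|_{\e,1}$ and dividing by $\|w_1\|_{\e,1}$ gives the claimed inequality.
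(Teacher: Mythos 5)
Your proposal is conceptually sound and would deliver the advertised $O(\e)$ decay rate, but it follows a genuinely different route from the paper, and the route you choose yields a strictly larger constant than the one stated. Both arguments start identically: test the resolvent identity for $u_0$ with $w_1$, take real parts, and reduce to estimating a residual linear functional. The divergence comes in how the residual is made small.

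You Taylor-expand the gauge, $\E^{-\iu\a x_d}=1-\iu\a x_d+R$, split $\phi$ into three pieces, and use the fixed test function $P_\e v$ in the weak equation for $w_0$. The paper instead keeps the gauge $\E^{-\iu\a x_d}$ exact and --- this is the key trick --- uses the $x_d$-dependent test function $x'\mapsto \E^{\iu\a(x')x_d}w_1(x',x_d)$ in the $w_0$ resolvent identity, treating $x_d$ as a parameter and integrating by Fubini. With this choice all Robin boundary contributions cancel \emph{exactly} after one integration by parts in $x_d$ (the $\p_d$-derivative of the gauge produces precisely the $\a^2$ bulk term that the resolvent identity then absorbs), and the surviving residual $F_\e'$ consists solely of $\nabla'\a$ cross terms plus $(P_\e f,(1-\E^{\iu\a x_d})w_1)_\e$. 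All of these are controlled by first powers of $\|\nabla'\a\|_\infty$ and $\|\a\|_\infty$, giving the clean constant $C_0=(\|\nabla'\a\|_\infty+\|\a\|_\infty)/\sqrt{3}$.

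In your scheme the cancellations are only partial. The Robin boundary term for $\phi_1$ does cancel against $\p_d\phi_2\,\p_d\bar v$, and the $\nabla' w_0\cdot\nabla'\bar v+w_0\bar v$ terms are absorbed by the $w_0$ equation tested on $P_\e v$; but the $\phi_3=Rw_0$ piece still carries a nonzero Robin boundary contribution at $x_d=\e$ (since $R(\cdot,\e)\neq 0$). To get an $O(\e)$ estimate for that boundary term you must either invoke an $\e$-dependent trace inequality or integrate it back to a bulk integral using $R(\cdot,0)=0$; either way you pick up factors $\|\a\|_\infty^2$ or $\|\a\|_\infty^3$ that are absent from the paper's $C_0$. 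Similarly, the $\a x_d\nabla'w_0\cdot\nabla'\bar v$ cross term from $\nabla'\phi_2$, which simply does not appear in the paper's $F_\e'$ thanks to the gauged test function, adds another $\|\a\|_\infty$ contribution. So the final line of your proposal, claiming $|L(v)|\leqslant C_0\,\e\,\|P_\e f\|_\e\,\|v\|_{\e,1}$ with exactly the stated $C_0$, is not justified by your method; the correct conclusion of your argument is the same $O(\e)$ bound with a larger (still $\e$-independent) constant. If the precise value of $C_0$ matters, you need the paper's test-function choice. Finally, note a typo in the statement itself: the displayed inequality should read $\|w_1\|_{\e,1}\leqslant C_0\,\e\,\|P_\e f\|_\e$ (the subscript $1$ on the constant is a slip).
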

\begin{proof}
By definition, $u_0$~satisfies the resolvent equation
\begin{equation*}
  \forall v \in \H^1(\Om_\e) \,, \qquad
  h_\eps(u_0,v) + (u_0,v)_\eps
  = (P_\eps f,v)_\eps
  \,.
\end{equation*}
Choosing~$w_1$ for the test function~$v$
and using the decomposition~\eqref{u0.decomposition},
we get
\begin{equation}\label{w1.identity}
  h_\eps[w_1] + \|w_1\|_\eps^2
  = (P_\eps f,w_1)_\eps - h_\eps(u_0-w_1,w_1)
  - (u_0-w_1,w_1)_\eps
  =: F_\eps
  \,.
\end{equation}
It is straightforward to check that
\begin{align*}
  h_\eps(u_0-w_1,w_1)
  &= \big(\nabla w_0,\nabla e^{\iu \alpha x_d} w_1\big)_\eps
  + \big(\alpha^2 w_0, e^{\iu \alpha x_d} w_1\big)_\eps
  - F_\eps'
  \\
  &= - \big(w_0, e^{\iu \alpha x_d} w_1\big)_\eps
  + \big(P_\eps f,e^{\iu \alpha x_d} w_1\big)_\eps
  - F_\eps'
\end{align*}
with
$$
  F_\eps'
  := \iu \big(x_d\;\!w_0\;\! \nabla'\alpha, e^{\iu \alpha x_d} \nabla'w_1\big)_\e
  -\iu \big(\nabla' w_0,x_d\;\!e^{\iu \alpha x_d} w_1\;\! \nabla'\alpha\big)_\e
  \,.
$$
Here the first equality follows by algebraic manipulations
using an integration by parts,
while the second is a consequence of~\eqref{w0.identity},
with  $x' \mapsto e^{\iu \alpha(x') x_d} w_1(x',x_d)$
being the test function, and Fubini's theorem.
At the same time,
$
  (u_0-w_1,w_1)_\eps
  = (w_0,e^{\iu \alpha x_d} w_1)_\eps
$.
Hence,
$$
  F_\eps = F_\eps' + \big(P_\eps f,w_1-e^{\iu \alpha x_d} w_1\big)_\eps
  \,.
$$

We proceed with estimating~$F_\eps$:
\begin{align*}
  |F_\eps|
  &\leqslant\frac{\eps^{3/2}}{\sqrt{3}}
  \Big(
  \|\nabla'\alpha\|_\infty \|w_0\| \|\nabla'w_1\|_\eps
  + \|\nabla'\alpha\|_\infty \|\nabla'w_0\| \|w_1\|_\eps
  + \|\alpha\|_\infty \|P_\eps f\| \|w_1\|_\eps
  \Big)
  \\
  &\leqslant\frac{\eps^{3/2}}{\sqrt{3}}
 \Big(
  \|\nabla'\alpha\|_\infty \sqrt{\|w_0\|^2+\|\nabla'w_0\|^2}
  \sqrt{ \|\nabla'w_1\|_\eps^2+\|w_1\|_\eps^2}
  + \|\alpha\|_\infty \|P_\eps f\| \|w_1\|_\eps
  \Big)
  \\
  &\leqslant\frac{\eps}{\sqrt{3}}
  \Big(
  \|\nabla'\alpha\|_\infty \|w_0\|_{\eps,1}
  + \|\alpha\|_\infty \|P_\eps f\|_\eps
  \Big) \|w_1\|_{\eps,1}
  \,.
\end{align*}
Here the first inequality follows by the Schwarz inequality,
an explicit value of the integral of~$x_d^2$
and obvious bounds such as~\eqref{2.10}.

Finally, taking the real part of~\eqref{w1.identity} and using the above
estimate of~$|F_\eps|$, we get
$$
  \|w_1\|_{\eps,1}
  \leqslant\frac{\eps}{\sqrt{3}}
  \Big( \|\nabla'\alpha\|_\infty \|w_0\|_{\eps,1}
  + \|\alpha\|_\infty \|P_\eps f\|_\eps
  \Big)
  \,.
$$
The desired bound then follows by estimating $\|w_0\|_{\eps,1}$
by means of Lemma~\ref{Lem.w0}.
\end{proof}

Now we are in a position to conclude the proof of Theorem~\ref{th1.2}
by simply comparing~$u$ with~$w_0$.
As for the convergence in the topology of $\sii(\Omega_\eps)$,
we write
\begin{align*}
  \|u-w_0\|_\eps
  &= \|u_1+w_1+(\E^{-\iu\a x_d}-1)w_0\|_\eps
  \\
  &\leqslant\|u_1\|_\eps + \|w_1\|_\eps
  + \|(\E^{-\iu\a x_d}-1)w_0\|_\eps
  \,.
\end{align*}
Here the last term can be estimated using Lemma~\ref{lm2.2}
as follows
$$
  \|(\E^{-\iu\a x_d}-1)w_0\|_\eps
  \leqslant\frac{\eps^{3/2}}{\sqrt{3}} \, \|\alpha\|_\infty \|w_0\|
  = \frac{\eps}{\sqrt{3}} \, \|\alpha\|_\infty \|w_0\|_\eps
  \,.
$$
Hence, using~\eqref{u1.negligible}, Lemma~\ref{Lem.w1} and Lemma~\ref{Lem.w0},
we get the bound
\begin{align*}
  \|u-w_0\|_\eps
 & \leqslant \|u_1\|_{\eps,1} + \|w_1\|_{\eps,1}
  + \frac{\eps}{\sqrt{3}} \, \|\alpha\|_\infty
 \|w_0\|_\eps
\\
&\leqslant \e \left(\frac{1}{\pi} \, \|P_\eps^\bot f\|_\eps
  + \frac{1}{\sqrt{3}} \big(
 \|\nabla'\alpha\|_\infty+2 \;\! \|\alpha\|_\infty\big) \, \|P_\eps f\|_\eps
\right)
\\
&\leqslant C \, \eps \, \|f\|_\eps
\end{align*}
Here the last estimate follows by the Schwarz inequality
recalling~\eqref{f.direct} and holds with the constant~$C$
as defined in Theorem~\ref{th1.2}.
This proves~\eqref{1.7}.

As for the bound~\eqref{1.8}, we have
\begin{align*}
  \|u-(1+Q)w_0\|_{\eps,1}
  &= \|u_1+w_1+(\E^{-\iu\a x_d}-1+\iu \a x_d)w_0\|_{\eps,1}
  \\
  &\leqslant\|u_1\|_{\eps,1} + \|w_1\|_{\eps,1}
  + \|(\E^{-\iu\a x_d}-1+\iu \a x_d)w_0\|_{\eps,1}
  \,.
\end{align*}
Here the last term can be estimated using Lemma~\ref{lm2.2} as follows.
Employing the individual estimates
\begin{align*}
  \|(\E^{-\iu\a x_d}-1+\iu \a x_d)w_0\|_\eps
  &\leqslant\frac{\eps^{5/2}}{2\sqrt{5}} \, \|\alpha\|_\infty^2 \|w_0\|
  = \frac{\eps^2}{2\sqrt{5}} \, \|\alpha\|_\infty^2 \|w_0\|_\eps
  \,,
  \\
  \|(\E^{-\iu\a x_d}-1+\iu \a x_d)\nabla w_0\|_\eps
  &\leqslant\frac{\eps^{5/2}}{2\sqrt{5}} \, \|\alpha\|_\infty^2 \|\nabla'w_0\|
  = \frac{\eps^{2}}{2\sqrt{5}} \, \|\alpha\|_\infty^2 \|\nabla'w_0\|_\eps
  \,,
  \\
  \|w_0\nabla(\E^{-\iu\a x_d}-1+\iu \a x_d)\|_\eps
  &\leqslant\frac{\eps^{3/2}}{\sqrt{3}} \, \|\a\|_\infty
  \sqrt{\|\a\|_\infty^2+\|\nabla'\a\|_\infty^2 \, \eps^2}\, \|w_0\|
  \\
  &= \frac{\eps}{\sqrt{3}} \, \|\a\|_\infty
  \sqrt{\|\a\|_\infty^2+\|\nabla'\a\|_\infty^2 \, \eps^2}\, \|w_0\|_\eps
  \,,
\end{align*}
and the Schwarz inequality,
we may write
$$
  \|(\E^{-\iu\a x_d}-1+\iu \a x_d)w_0\|_{\eps,1}
  \leqslant C_1(\eps) \, \eps \, \|w_0\|_{\eps,1}
$$
with the same constant~$C_1(\eps)$ as defined in Theorem~\ref{th1.2}.
Consequently, using~\eqref{u1.negligible},
Lemma~\ref{Lem.w1}, Lemma~\ref{Lem.w0}
and the Schwarz inequality employing~\eqref{f.direct},
we get the bound
\begin{equation*}
  \|u-(1+Q)w_0\|_{\eps,1}
  \leqslant C(\eps) \, \eps \, \|f\|_\eps
\end{equation*}
with
\begin{equation}\label{constant'}
  C(\eps) := \sqrt{
  \frac{1}{\pi^2} + \Big(C_0 + C_1(\eps)\Big)^2
  }
  \,.
\end{equation}
Note that $C(\eps)$ coincides
with the corresponding constant of Theorem~\ref{th1.2}.
This concludes the proof of Theorem~\ref{th1.2}.

\section*{Acknowledgments}
D.B.~acknowledges the support and hospitality of the Basque Center for Applied
Mathematics in Bilbao. D.B. was partially supported by RFBR, by the grants of
the President of Russia for young scientists-doctors of sciences (MD-453.2010.1)
and for Leading Scientific School (NSh-6249.2010.1), and by the Federal Task
Program ``Research and educational professional community of innovation Russia''
(contract 02.740.11.0612). D.K. was partially supported by the Czech Ministry of
Education, Youth and Sports within the project LC06002 and by the GACR grant
P203/11/0701.

%

\begin{thebibliography}{10}

\bibitem{Bender_2007}
C.~M. Bender, \emph{Making sense of non-{H}ermitian {H}amiltonians}, Rep. Prog.
  Phys. \textbf{70} (2007), 947--1018.

\bibitem{Birman-Suslina_2007}
M.~Sh. Birman and T.~A. Suslina, \emph{Homogenization with corrector for
  periodic differential operators. approximation of solutions in the {S}obolev
  class {$H^1(\mathbb{R}^d)$}}, St. Petersburg Math. J. \textbf{18} (2007),
  857--955.

\bibitem{Borisov-Bunoiu-Cardone_2011}
D.~Borisov, R.~Bunoiu, and G.~Cardone, \emph{On a waveguide with frequently
  alternating boundary conditions: homogenized {N}eumann condition}, Ann. H.
  Poincar{\'e} \textbf{11} (2011), 1591--1627.

\bibitem{BK}
D.~Borisov and D.~Krej\v{c}i\v{r}\'{\i}k, \emph{{$\mathcal{PT}$-symmetric
  waveguides}}, Integral Equations Operator Theory \textbf{62} (2008), no.~4,
  489--515.

\bibitem{BMT}
G.~Bouchitt\'e, M.~L. Mascarenhas, and L.~Trabucho, \emph{On the curvature and
  torsion effects in one dimensional waveguides}, ESAIM: Control, Optimisation
  and Calculus of Variations \textbf{13} (2007), 793--808.

\bibitem{DE}
P.~Duclos and P.~Exner, \emph{{C}urvature-induced bound states in quantum
  waveguides in two and three dimensions}, Rev. Math. Phys. \textbf{7} (1995),
  73--102.

\bibitem{Edmunds-Evans}
D.~E. Edmunds and W.~D. Evans, \emph{Spectral theory and differential
  operators}, Oxford University Press, New York, 1987.

\bibitem{Friedlander-Solomyak_2007}
L.~Friedlander and M.~Solomyak, \emph{On the spectrum of the {D}irichlet
  {L}aplacian in a narrow strip}, Israeli Math. J. \textbf{170} (2009), no.~1,
  337--354.

\bibitem{Gadylshin_2002}
R.~R.~Gadyl'shin, \emph{On local perturbations of Shr\"odinger operator in axis},
Theor. Math. Phys. \textbf{132} (2002), no.~1, 976--982.

\bibitem{Garcia-Putinar}
S.~R.~Garcia and M.~Putinar,
\emph{Complex symmetric operators and applications},
Trans. Amer. Math. Soc. \textbf{358} (2006), 1285--1315.

\bibitem{Grieser-2008}
D.~Grieser, \emph{Thin tubes in mathematical physics, global analysis and
  spectral geometry}, Analysis on Graphs and its Applications, Cambridge, 2007
  (P.~Exner et~al., ed.), Proc. Sympos. Pure Math., vol.~77, Amer. Math. Soc.,
  Providence, RI, 2008, pp.~617--636.

\bibitem{HKS}
H.~Hernandez-Coronado, D.~Krej{\v ci\v r\'i}k, and P.~Siegl, \emph{Perfect
  transmission scattering as a {$\mathcal{PT}$}-symmetric spectral problem},
  preprint on arXiv:1011.4281v1 [math-ph] (2010).

\bibitem{Kaiser-Neidhardt-Rehberg_2003b}
H.-Ch. Kaiser, H.~Neidhardt, and J.~Rehberg, \emph{Macroscopic current induced
  boundary conditions for {S}chr{\"o}dinger-type operators}, Integral Equations
  and Operator Theory \textbf{45} (2003), 39--63.

\bibitem{Kato}
T.~Kato, \emph{Perturbation theory for linear operators}, Springer-Verlag,
  Berlin, 1966.

\bibitem{K5}
D.~Krej\v{c}i\v{r}\'{\i}k, \emph{{Spectrum of the Laplacian in a narrow curved
  strip with combined Dirichlet and Neumann boundary conditions}}, ESAIM:
  Control, Optimisation and Calculus of Variations \textbf{15} (2009),
  555--568.

\bibitem{KT}
D.~Krej\v{c}i\v{r}\'{\i}k and M.~Tater, \emph{Non-{H}ermitian spectral effects
  in a {$\mathcal{PT}$}-symmetric waveguide}, J.~Phys.~A: Math. Theor.
  \textbf{41} (2008), 244013.

\bibitem{Lampart-Teufel-Wachsmuth}
J.~Lampart, S.~Teufel, and J.~Wachsmuth, \emph{Effective {H}amiltonians for
  thin {D}irichlet tubes with varying cross-section}, preprint on
  arXiv:1011.3645 [math-ph] (2010).

\bibitem{Ali-review}
A.~Mostafazadeh, \emph{Pseudo-{H}ermitian representation of quantum mechanics},
  Int. J. Geom. Meth. Mod. Phys. \textbf{7} (2010), 1191--1306.

\bibitem{Schatzman_1996}
M.~Schatzman, \emph{On the eigenvalues of the {L}aplace operator on a thin set
  with {N}eumann boundary conditions}, Applicable Anal. \textbf{61} (1996),
  293--306.

\bibitem{Zhikov_2006}
V.~V. Zhikov, \emph{Some estimates from homogenization theory}, Doklady
  Mathematics \textbf{73} (2006), 96--99.

\end{thebibliography}
%

\end{document}